\newcommand{\Br}{{\mathrm{Br}}}
\renewcommand{\div}{{\mathrm{div}}}
\newcommand{\red}{{\mathrm{red}}}
\newcommand{\Gal}{{\mathrm{Gal}}}
\newcommand{\Hom}{{\mathrm{Hom}}}
\newcommand{\coker}{{\mathrm{coker}}}
\newcommand{\NS}{{\mathrm{NS}}}
\newcommand{\rank}{{\mathrm{rank}}}
\newcommand{\Pic}{\mathrm{Pic}}
\newcommand{\uPic}{\underline{\mathrm{Pic}}}
\newcommand{\zl}{{\ZZ_\ell}}
\newcommand{\ql}{{\QQ_\ell}}
\newcommand{\belln}{[\ell^n]}
\newcommand{\muln}{\mu_{\ell^n}}
\newcommand{\gk}{{G_k}}
\renewcommand{\Im}{{\mathrm{Im}}}
\newcommand{\Res}{{\mathrm{Res}}}
\newcommand{\tor}{{\mathrm{tor}}}
\newcommand{\QQ}{\mathbb{Q}}
\newcommand{\ZZ}{\mathbb{Z}} 
\newcommand{\GG}{\mathbb{G}}
\newcommand{\lra}{\longrightarrow}
\newtheorem{thm}{Theorem}[section]
\newtheorem{lem}[thm]{Lemma}
\newtheorem{conj}[thm]{Conjecture}
\theoremstyle{definition}
\theoremstyle{remark}
\newtheorem{fact}[thm]{Fact}
\begin{document}
%------------------------------------------------------

\title{Comparison of arithmetic Brauer groups with geometric Brauer groups
\footnote{The main theorem and its method of proof in this paper were essentially covered in a previous work of Colliot-Th\'el\`ene and Skorobogatov. See the second paragraph of the introduction for more details.}
}
\author{Xinyi Yuan}
\maketitle

\tableofcontents

\section{Introduction}

The Brauer group has always been a very important object in number theory and algebraic geometry, as we can see from the classical class field theory, the application of Tsen's theorem to \'etale cohomology of curves, and the Tate conjecture for divisors. 
The goal of this paper is to compare the arithmetic Brauer group with the geometric Brauer group of smooth projective varieties, both of which are natural invariants arising from consideration of the Tate conjecture. 

After the first submission of this paper to arXiv, the author learnt that a major part of the main theorem of this paper (Theorem \ref{comparison}) was previously proved by 
Colliot-Th\'el\`ene and Skorobogatov in \cite[Thm. 2.1]{CTS1}. They only treated the case of characteristic zero, while our paper treated all characteristics simultaneously. However, our main idea is essentially the same as theirs, and the case of positive characteristics does not bring much extra difficulty. 
Hence, our paper should be viewed as explanation of known results and proofs more than claim of original results and proofs.

\subsection{Tate conjecture and geometric Brauer group}

Let us first recall the Tate conjecture for divisors over finitely generated fields. 

\begin{conj}[Conjecture $T^1(X,\ell)$] \label{Tate conjecture}
Let $X$ be a projective, smooth and geometrically integral variety over a finitely generated field $k$ of characteristic $p\geq 0$, and $\ell\neq p$ be a prime number. Then the cycle class map 
$$
\Pic(X)\otimes_\ZZ \QQ_\ell \lra H^2(X^s, \QQ_\ell(1))^{G_k}
$$
is surjective.
\end{conj}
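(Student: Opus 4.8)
The final statement is Tate's conjecture for divisors, which is open in general; what follows is therefore a proposal for the standard line of attack, together with an honest account of where it can actually be pushed through.

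\textbf{Reduction to finite fields.} When $\charr k = p > 0$, a spreading-out argument reduces a general finitely generated $k$ to the case $k$ finite: spread $X$ out to a smooth proper family over a finitely generated $\FF_p$-algebra, specialize at a suitable closed point, and invoke smooth proper base change, using that the N\'eron--Severi rank and $\dim_{\QQ_\ell} H^2(X^s,\QQ_\ell(1))^{G_k}$ can only increase under specialization while staying bounded by the Betti number. (In characteristic $0$ one reduces instead to a number field, where this method does not apply directly and must be supplemented by Hodge-theoretic input; I concentrate on the finite-field strand.)

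\textbf{The Kummer sequence and the Brauer group.} Over $X^s$ the Kummer sequence gives, for each $n$,
$$
0 \lra \Pic(X^s)/\ell^n \lra H^2(X^s,\muln) \lra \Br(X^s)[\ell^n] \lra 0 .
$$
Passing to the limit over $n$, taking $G_k$-invariants, and tensoring with $\QQ_\ell$ converts $T^1(X,\ell)$ into a vanishing statement for the $G_k$-invariants of the rational Tate module of $\Br(X^s)$; over a finite field this is precisely the finiteness of the $\ell$-primary part of $\Br(X)$ (the Artin--Tate reformulation, where the Weil bounds on Frobenius eigenvalues are what make \enquote{invariant} equivalent to \enquote{finite}). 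So the plan reduces to: \emph{for $X$ smooth projective over a finite field, prove that $\Br(X)\{\ell\}$ is finite.}

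\textbf{The finiteness input, and the main obstacle.} For this last step the known techniques are: for abelian varieties, and anything cut out motivically from them, Tate's theorem on homomorphisms of abelian varieties over finite fields together with semisimplicity of Frobenius on $H^1$; for K3 surfaces and their analogues (holomorphic symplectics, cubic fourfolds, \dots), the Kuga--Satake construction, which realizes the relevant piece of $H^2$ inside $H^1\otimes H^1$ of an abelian variety and so reduces to the previous case, in positive characteristic at the cost of integral canonical models of $\GSpin$ Shimura varieties and a CM-lifting argument; and for varieties admitting a suitable fibration or dominated by a product of curves, Lefschetz-pencil and monodromy arguments. The genuine obstacle --- the reason $T^1$ is still a conjecture --- is precisely that none of these applies to an arbitrary variety: there is no known mechanism for manufacturing enough algebraic divisors to exhaust $H^2(X^s,\QQ_\ell(1))^{G_k}$ short of a motivic reduction to abelian varieties or deep arithmetic input. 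Thus the proposal is unconditional only for the classes just listed, and in full generality must be read as conditional on the Tate conjecture for abelian varieties together with a serviceable theory of motives.
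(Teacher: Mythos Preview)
The statement you were asked to prove is labeled \emph{Conjecture} in the paper, not Theorem; the paper does not prove it and makes no attempt to. It merely records the known cases with references: abelian varieties (Tate, Zarhin, Faltings) and K3 surfaces (Andr\'e, Tankeev, Nygaard, Nygaard--Ogus, Artin--Swinnerton-Dyer, Maulik, Charles, Madapusi-Pera). Your proposal correctly identifies the statement as open in general and gives an honest summary of the standard strategies --- reduction to finite fields via spreading out, the Kummer/Brauer reformulation (which is exactly the content of the paper's Theorem~\ref{equivalence}), and the Kuga--Satake and motivic inputs needed for the known special cases. There is therefore no proof in the paper to compare against, and your assessment of the situation is accurate.
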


Here we denote $G_k=\Gal(k^s/k)$ and $X^s=X\times_k k^s$, where $k^s$ is the separable closure of $k$. 

The conjecture is proved for abelian varieties by Tate \cite{Tat1} (over finite fields), Zarhin \cite{Zar1, Zar2} (over positive characteristics) and Faltings \cite{Fal1,Fal2} (over characteristic zero); it is proved for K3 surfaces over characteristic zero by Andr\'e \cite{And} and Tankeev \cite{Tan1, Tan2}, and for K3 surfaces over positive characteristics by Nygaard \cite{Nyg}, Nygaard--Ogus \cite{NO}, Artin--Swinnerton-Dyer \cite{ASD}, Maulik \cite{Mau}, Charles \cite{Cha} and Madapusi-Pera \cite{MP}.

For any regular noetherian scheme $X$, the \emph{(cohomological) Brauer group}
$$\Br(X):=H^2(X, \GG_m)$$
is defined to be the \'etale cohomology group. It is automatically torsion by \cite[Prop. 1.4]{Gro2} since $X$ is regular. 
The Tate conjecture is related to the Brauer group by the following result. 

\begin{thm} \label{equivalence}
Let $X$ be a projective, smooth and geometrically integral variety over a field $k$ of characteristic $p\geq 0$, and $\ell\neq p$ be a prime number. 
Then there is a canonical exact sequence
$$
0\lra \NS(X)\otimes_\ZZ \QQ_\ell \lra H^2(X^s, \QQ_\ell(1))^{G_k}
\lra V_\ell(\Br(X^s)^{G_k}) 
\lra 0.
$$
In particular, if $k$ is finitely generated, then $T^1(X,\ell)$ holds if and only if $\Br(X^s)^{G_k}[\ell^\infty]$ is finite.
\end{thm}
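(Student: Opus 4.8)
The plan is to obtain the sequence from the Kummer sequence on $X^s$, pass to the $\ell$-adic limit, and then take $G_k$-invariants. For each $n\geq 1$ the Kummer sequence $1\to\mu_{\ell^n}\to\GG_m\xrightarrow{\ell^n}\GG_m\to 1$ on $X^s$, together with $H^1(X^s,\GG_m)=\Pic(X^s)$ and $H^2(X^s,\GG_m)=\Br(X^s)$, gives a short exact sequence
\[
0\to\Pic(X^s)/\ell^n\to H^2(X^s,\mu_{\ell^n})\to\Br(X^s)[\ell^n]\to 0,
\]
and since $\ell\neq p$ the group $\Pic^0(X^s)$ is $\ell$-divisible, so $\Pic(X^s)/\ell^n=\NS(X^s)/\ell^n$. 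All three terms are finite (finiteness of \'etale cohomology with finite coefficients for the proper $X^s$; finite generation of $\NS$), so the inverse limit over $n$ is exact and produces a short exact sequence of finitely generated $\ZZ_\ell$-modules with continuous $G_k$-action
\[
0\to\NS(X^s)\otimes\ZZ_\ell\to H^2(X^s,\ZZ_\ell(1))\to T_\ell\Br(X^s)\to 0 .
\]
Tensoring with $\QQ_\ell$ is exact; abbreviate the result as $0\to N\to H\to V\to 0$, a sequence of finite-dimensional continuous $G_k$-representations with $N=\NS(X^s)\otimes\QQ_\ell$, $H=H^2(X^s,\QQ_\ell(1))$, $V=V_\ell\Br(X^s)$.

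Taking $G_k$-invariants of $0\to N\to H\to V\to 0$ is automatically left exact, but right exactness is the heart of the matter: one cannot expect $H^1(G_k,N)$ to vanish (already for $N$ a non-zero trivial representation over a number field it does not, by the cyclotomic character). Instead I will split $N$ off $G_k$-equivariantly. Since $X$ is projective over $k$ it carries an ample line bundle over $k$, hence a $G_k$-invariant ample class $h\in\NS(X^s)^{G_k}$; put $d=\dim X$. Cup product with $h^{d-2}$ followed by the trace isomorphism $H^{2d}(X^s,\QQ_\ell(d))\cong\QQ_\ell$ defines a $G_k$-equivariant symmetric bilinear form on $H$, non-degenerate by the Hard Lefschetz theorem for $\ell$-adic cohomology (Deligne) and Poincar\'e duality. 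Its restriction to $N$, via the cycle class embedding $N\hookrightarrow H$, is the pairing $(D,D')\mapsto\deg(D\cdot D'\cdot h^{d-2})$ on $\NS(X^s)\otimes\QQ$, which is non-degenerate by the Hodge index theorem (valid for divisors in all characteristics). Hence $N\cap N^{\perp}=0$, and non-degeneracy on $H$ forces $H=N\oplus N^{\perp}$ as $G_k$-modules, so $H\to V$ restricts to a $G_k$-isomorphism $N^{\perp}\xrightarrow{\sim}V$. Taking $G_k$-invariants of this decomposition yields
\[
0\to(\NS(X^s)\otimes\QQ_\ell)^{G_k}\to H^2(X^s,\QQ_\ell(1))^{G_k}\to(V_\ell\Br(X^s))^{G_k}\to 0 .
\]

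It remains to rewrite the two outer terms. Since $T_\ell$ is built from $\varprojlim$ and $\ell^n$-torsion it commutes with $(-)^{G_k}$, and as $T_\ell\Br(X^s)$ is finitely generated over $\ZZ_\ell$ (it embeds into $H^2(X^s,\ZZ_\ell(1))$) one gets $(T_\ell\Br(X^s))^{G_k}\otimes\QQ_\ell=(T_\ell\Br(X^s)\otimes\QQ_\ell)^{G_k}$, whence $(V_\ell\Br(X^s))^{G_k}=V_\ell(\Br(X^s)^{G_k})$. For the left term, $(\NS(X^s)\otimes\QQ_\ell)^{G_k}=\NS(X^s)^{G_k}\otimes\QQ_\ell$ because $\NS(X^s)$ is finitely generated; and the Hochschild--Serre spectral sequence for $\GG_m$ on $X$, using $\GG_m(X^s)=(k^s)^{\times}$ (here $X$ is proper and geometrically integral) and Hilbert's Theorem 90, gives an exact sequence $0\to\Pic(X)\to\Pic(X^s)^{G_k}\to\Br(k)$. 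Since $\Br(k)$ and $H^1(G_k,\Pic^0(X^s))$ are torsion, tensoring with $\QQ$ and dividing by $\Pic^0$ on both sides yields $\NS(X)\otimes\QQ\xrightarrow{\sim}\NS(X^s)^{G_k}\otimes\QQ$. Substituting these identifications, and observing that the left-hand map is the cycle class map, gives the asserted canonical exact sequence.

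For the final assertion: as the cycle class map kills $\Pic^0$, the surjectivity in $T^1(X,\ell)$ is equivalent to surjectivity of $\NS(X)\otimes\QQ_\ell\to H^2(X^s,\QQ_\ell(1))^{G_k}$, hence by the exact sequence to $V_\ell(\Br(X^s)^{G_k})=0$. Since $T_\ell$ of any abelian group is torsion-free and $T_\ell(\Br(X^s)^{G_k})$ is finitely generated over $\ZZ_\ell$, this is equivalent to $T_\ell(\Br(X^s)^{G_k})=0$; and because $\Br(X^s)[\ell]$ is finite (Kummer), $\Br(X^s)^{G_k}[\ell^\infty]$ is co-finitely generated, isomorphic to $(\QQ_\ell/\ZZ_\ell)^{r}\oplus(\text{finite})$ with $T_\ell$ equal to $\ZZ_\ell^{r}$, so $T_\ell(\Br(X^s)^{G_k})=0$ if and only if $\Br(X^s)^{G_k}[\ell^\infty]$ is finite. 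The one substantive point in all of this is the $G_k$-equivariant splitting of $H^2$: the $\QQ_\ell$-coefficient sequence does not split for formal reasons, and Hard Lefschetz together with the Hodge index theorem is exactly what supplies the decomposition; the remaining steps are bookkeeping with exact sequences, torsion, and flat base change.
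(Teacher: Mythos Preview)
Your proof is correct and follows essentially the same route as the paper: derive the integral sequence from Kummer, tensor with $\QQ_\ell$, split it $G_k$-equivariantly via the orthogonal complement of $\NS$ under the cup-product pairing against $h^{d-2}$, then identify the outer terms using Hochschild--Serre and the commutation of $T_\ell$ with Galois invariants. Two small remarks: the paper observes (and your argument confirms on inspection) that non-degeneracy of the pairing on $\NS(X^s)_{\QQ_\ell}$ alone already forces $H=N\oplus N^{\perp}$, so Hard Lefschetz is not strictly required; and $T_\ell\Br(X^s)$ is a \emph{quotient} of $H^2(X^s,\ZZ_\ell(1))$, not a submodule, though your conclusion that it is finitely generated is of course unaffected.
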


Here for an abelian group $G$, we denote  
$\displaystyle T_\ell(G)=\Hom_{\ZZ}(\QQ_\ell/\ZZ_\ell, G)$ and $\displaystyle V_\ell(G)=T_\ell(G)\otimes_{\ZZ_\ell}\QQ_\ell$ for the Tate modules.

This theorem is known to experts, as a generalization of the work over finite fields of Tate \cite{Tat2}. See \cite[Thm. 1.1]{Tan3}, \cite[Prop. 2.5]{SZ}, and \cite[Prop. 4.1]{CTS2} for the result under some different or extra assumptions. We will write a complete proof of the theorem in \S\ref{sect tate brauer} for convenience of readers. Our proof is a variant of that of \cite[Prop. 2.5]{SZ}, and the key is to the existence of a $G_k$-invariant splitting of the exact sequence 
$$
0\lra \NS(X^s)\otimes_\ZZ \QQ_\ell \lra H^2(X^s, \QQ_\ell(1))
\lra V_\ell\Br(X^s) 
\lra 0.
$$
The splitting implies that the $G_k$-invariants of the sequence form a short exact sequence. 
Such a splitting is constructed as the orthogonal complement of 
$\NS(X^s)\otimes_\ZZ \QQ_\ell$ in $H^2(X^s, \QQ_\ell(1))$
under the cup-product pairing of $H^2(X^s, \QQ_\ell(1))$ with itself (with respect to a fixed ample line bundle over $X$).

\subsection{Geometric Brauer group vs Arithmetic Brauer group}

In Theorem \ref{equivalence},  $\Br(X^s)$ is 
the \emph{geometric Brauer group} of $X$, comparing to the 
\emph{arithmetic Brauer group} $\Br(X)$ of $X$. 
We are interested in the natural map $\Br(X)\to \Br(X^s)^{G_k}$, and particularly how far it is from an isomorphism. 
The main result of this paper is as follows. 

\begin{thm} \label{comparison}
Let $X$ be a projective, smooth and geometrically integral variety over a field $k$ of characteristic $p\geq0$. 
 \begin{itemize}
\item[(1)] There is a canonical injection of $\ker(\Br(X)/\Br(k)\to \Br(X^s)^{G_k})$ into $H^1(k, \Pic(X^s))$, and the cokernel of this injection has a finite exponent. 
\item[(2)] If $p=0$, then the cokernel of 
$\Br(X)/\Br(k) \to \Br(X^s)^{G_k}$ is finite.
\item[(3)] If $p>0$, then the cokernel of 
$\Br(X)/\Br(k) \to \Br(X^s)^{G_k}$ is isomorphic to the direct sum of a finite group of order prime to $p$ and a $p$-group of finite exponent.
\item[(4)] If $k$ is a finite field,  then the kernel and the cokernel of 
$\Br(X)/\Br(k) \to \Br(X^s)^{G_k}$ are both finite.
\end{itemize}
\end{thm}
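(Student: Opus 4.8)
The plan is to realize $\Br(X)/\Br(k)\to\Br(X^s)^{G_k}$ as the edge map in the Hochschild–Serre spectral sequence
$$
E_2^{p,q}=H^p(G_k,H^q(X^s,\GG_m))\ \Longrightarrow\ H^{p+q}(X,\GG_m),
$$
whose low-degree terms give the exact sequence
$$
\Br(k)\to\ker\bigl(\Br(X)\to\Br(X^s)^{G_k}\bigr)\to H^1(k,\Pic(X^s))\to H^3(k,\GG_m),
$$
and then an exact sequence controlling the cokernel of $\Br(X)\to\Br(X^s)^{G_k}$ in terms of the differentials $d_2\colon\Br(X^s)^{G_k}\to H^2(k,\Pic(X^s))$ and $d_3\colon E_3^{0,2}\to H^3(k,\Pic(X^s))$, together with the contribution of $H^3(G_k,\GG_m)$ coming from $H^3(X,\GG_m)$. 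The first bullet of Theorem \ref{comparison} is then the observation that the five-term exact sequence already gives the injection into $H^1(k,\Pic(X^s))$ with cokernel a subgroup of $H^3(k,\GG_m)$ killed by $\Br(k)$; I would reduce that $H^3$ term to finite exponent using a restriction–corestriction argument relative to the fixed field of the (open) kernel of $G_k\to\mathrm{Aut}(\NS(X^s))$, over which the Galois action on $\NS(X^s)$ is trivial and $\mathrm{Pic}^0$ is accounted for separately.

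For the cokernel statements (2) and (3), the key structural input is that $\Pic(X^s)$ is an extension of the finitely generated abelian group $\NS(X^s)$ by the divisible group $A(k^s)$, where $A=\mathrm{Pic}^0_{X/k}$ is an abelian variety. Passing to a finite separable extension $k'/k$ over which $G_{k'}$ acts trivially on $\NS(X^s)$ and applying restriction–corestriction again, the problem localizes to controlling $H^2(G_{k'},\mathrm{Pic}(X^{s}))$ and $H^3(G_{k'},\mathrm{Pic}(X^{s}))$. Using the extension, $H^i(G_{k'},A(k^s))$ and $H^i(G_{k'},\NS(X^s))$ must be shown to be finite (characteristic $0$) or finite-up-to-$p$-torsion-of-bounded-exponent (characteristic $p$): the $\NS$ part is $H^i$ of a finitely generated trivial module, which is finite in the relevant degrees over a finitely generated field by Poincaré-duality/cohomological-dimension inputs, and the abelian-variety part is handled via the Kummer sequence $0\to A[n]\to A\xrightarrow{n}A\to0$, finiteness of $H^j(G_k,A[\ell^m])$, and the Mordell–Weil-type finiteness of $H^1(G_k,A)$ torsion that is standard for finitely generated fields (away from $p$); the inverse limit/colimit bookkeeping produces exactly the "finite group prime to $p$ plus $p$-group of finite exponent" shape in characteristic $p$. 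Finally, bullet (4): over a finite field $G_k\cong\widehat{\ZZ}$ has cohomological dimension $1$, so $H^i(k,-)=0$ for $i\ge 2$ on torsion modules in the relevant range (or is explicitly finite), $H^3(k,\GG_m)=0$, and $H^1(k,\NS(X^s))$, $H^1(k,A(k^s))$ are finite by Lang's theorem / the Weil conjectures; feeding this into the spectral sequence collapses kernel and cokernel to finite groups directly.

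The main obstacle I anticipate is the positive-characteristic bookkeeping in (3): one must show that only the $p$-primary part can fail to be finite and that it nonetheless has bounded exponent. This requires care because $A(k^s)$ is not $p$-divisible in a way visible to étale cohomology (the $p$-part of $\Br(X^s)$ involves flat, not étale, cohomology), so I would sidestep it by working $\ell$-adically for each $\ell\ne p$ to get honest finiteness of the prime-to-$p$ part via Theorem \ref{equivalence} and the finiteness of $H^1(k,A)$-type groups, and then bound the $p$-part crudely: the cokernel is a quotient of $H^2(G_k,\mathrm{Pic}(X^s))\oplus(\text{image of }H^3(G_k,\GG_m))$, and restriction to $k'$ followed by corestriction multiplies it by $[k':k]$, which is prime to $p$ after we have also trivialized the finite $p$-group $\mathrm{Aut}(\NS(X^s))$-action; combined with the fact that $H^i$ of a profinite group with $p$-cohomological dimension considerations forces the $p$-part to be of exponent dividing a fixed power of $p$. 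Making "finite exponent" precise — rather than merely "torsion" — is exactly where the argument needs the restriction–corestriction trick applied to a single fixed finite extension, and that is the step I would write out most carefully.
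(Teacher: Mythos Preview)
Your spectral-sequence setup and your treatment of part (4) are essentially the paper's. The gap is in (2)--(3), and it is a genuine missing idea, not just bookkeeping.

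You propose to ``localize to controlling $H^2(G_{k'},\Pic(X^{s}))$ and $H^3(G_{k'},\Pic(X^{s}))$'' directly. This cannot work: the theorem is stated for an \emph{arbitrary} field $k$, and already $H^2(k,\NS(X^s))$ need not have finite exponent. For instance, if $\NS(X^s)\cong\ZZ$ with trivial action, then $H^2(k,\ZZ)\cong H^1(k,\QQ/\ZZ)=\Hom_{\mathrm{cont}}(G_k,\QQ/\ZZ)$, which is infinite and of unbounded exponent whenever $G_k^{\mathrm{ab}}$ is infinite (e.g.\ $k=\QQ$ or $k=\FF_p(t)$). Your appeals to ``Poincar\'e-duality/cohomological-dimension inputs'' and ``Mordell--Weil-type finiteness'' presuppose $k$ is finitely generated, which is not assumed, and even then do not give what you need. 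So the target $H^2(k,\Pic(X^s))$ is genuinely large; one cannot bound it, only the \emph{image} of $\beta_X\colon\Br(X^s)^{G_k}\to H^2(k,\Pic(X^s))$.

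The paper's device for bounding $\Im(\beta_X)$ is a pull-back trick you do not have: choose smooth curves $C_1,\dots,C_r\subset X$ (Bertini sections) so that the induced morphism $\uPic_{X/k,\red}\to\prod_i\uPic_{C_i/k}$ is, up to an abelian-variety complement, an isogeny onto its image (this uses the Hodge index theorem on $\NS$ and weak Lefschetz for $H^1$). Since $\Br(C_i^s)=0$, each $\beta_{C_i}$ is zero, so by functoriality $\Im(\beta_X)$ lands in $\ker\bigl(H^2(k,\Pic(X^s))\to\bigoplus_i H^2(k,\Pic(C_i^s))\bigr)$, and the isogeny statement forces that kernel to have finite exponent. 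This geometric input is the heart of the proof; restriction--corestriction alone cannot substitute for it.

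A smaller point: for part (1) you want to kill $\Im(\alpha_X)\subset H^3(k,\GG_m)$ by restricting to the fixed field of $G_k\to\mathrm{Aut}(\NS(X^s))$, but there is no reason $\alpha_{X_{k'}}$ vanishes just because the $\NS$-action is trivial. The paper instead observes that $\Im(\alpha_X)\subset\ker\bigl(H^3(k,\GG_m)\to H^3(X,\GG_m)\bigr)$ and bounds that kernel by corestriction from a closed point of $X$; the same trick handles $\gamma_X$.
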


%Here in (3), the term $G\nonp$ for a torsion abelian group $G$ denotes the subgroup of $G$ of elements of orders prime to $p$.

% By part (2) of the theorem, we see that Question 1 and Question 2 of \cite{SZ} are equivalent in this case. Another quick consequence of the theorem is as follows.

Part (2) of the theorem (for $p=0$) was previously proved by Colliot-Th\'el\`ene and Skorobogatov in \cite[Thm. 2.1]{CTS1}. Our method is essentially the same as that of the loc. cit., where the key idea is a pull-back trick. We will talk about the idea later. 

If $k$ is a finite field, combining Theorem \ref{equivalence} and 
Theorem \ref{comparison}(4), we see that $T^1(X,\ell)$ is equivalent to the finiteness of $\Br(X)[\ell^\infty]$. This is compatible with the pioneering results of \cite[Thm. 5.2]{Tat2} over finite fields.

Another interesting case happens when $H^1(X^s, \QQ_\ell)=0$ for a finitely generated $k$ of characteristic $p\geq0$ and a prime $\ell\neq p$. This implies $\Pic^0(X^s)=0$, and thus $\Pic(X^s)=\NS(X^s)$
is finitely generated. 
Then we have a canonical isomorphism 
$$V_\ell(\Br(X)/\Br(k)) \lra V_\ell(\Br(X^s)^{G_k}).$$
It follows that $T^1(X,\ell)$ is equivalent to the vanishing of the either of these rational Tate modules. 

In general, if $k$ is a finitely generated field of characteristic $p>0$, it is reasonable to conjecture that for any prime $\ell\neq p$, the canonical map 
$$V_\ell(\Br(X)/\Br(k)) \lra V_\ell(\Br(X^s)^{G_k})$$
is an isomorphism. By the theorem, this is equivalent to 
$$V_\ell(H^1(k, \Pic(X^s)))=0,$$
which in turn is equivalent to
$$V_\ell(H^1(k, \Pic^0(X^s)))=0.$$

%\begin{remark}
%Note that  \cite[Thm. 1.6]{Yua} further reduces 
%$T^1(X)$ to projective and smooth surfaces $X$ over $k$ with $H^1(X,\CO_X)=0$. 
%\end{remark}

\subsection{The pull-back trick}

Theorem \ref{comparison} is proved by careful analysis of related connecting maps in the Hochschild--Serre spectral sequence
$$
E_2^{a,b}= H^a(k, H^b(X^s, \GG_m))
\Longrightarrow  H^{a+b}(X, \GG_m).
$$
The spectral sequence induces a canonical long exact sequence 
$$
0\lra \ker(\alpha_X:H^1(k, \Pic(X^s))\to H^3(k,\GG_m))
\lra \Br(X)/\Br(k) \stackrel{ }{\lra} 
E_3^{0,2}\stackrel{\gamma_X}{\lra} E_3^{3,0}.
$$
Here
$$
E_3^{0,2}=\ker(\beta_X: \Br(X^s)^{G_k}\to H^2(k, \Pic(X^s)),
$$
$$
E_3^{3,0}=\coker(\alpha_X:H^1(k, \Pic(X^s))\to H^3(k,\GG_m)).
$$
Then the theorem is a consequence of the fact that the images of all the canonical maps 
$$\alpha_X:H^1(k, \Pic(X^s))\lra H^3(k,\GG_m),$$
$$\beta_X:\Br(X^s)^{G_k} \lra H^2(k, \Pic(X^s)),$$ 
and
$$\gamma_X:E_3^{0,2}\lra E_3^{3,0}$$ 
have finite exponents.  

This is proved in Lemma \ref{connecting map}. 
The images of $\alpha_X$ and $\gamma_X$ are easy to treat, following from the fact that the kernel of $H^3(k,\GG_m)\to H^3(X,\GG_m)$ has a finite exponent. 

The hard part is to bound the image of $\beta_X$.
The key idea is a pull-back trick used in Colliot-Th\'el\`ene and Skorobogatov \cite{CTS1}.

The pull-back trick is based on the observation that if $\dim X=1$, then $\Br(X^s)=0$ and thus the connecting map $\beta_X$ is zero. 
For general $\dim X$, we take suitable curves in $X$ and apply the functoriality of the connecting maps to relate the maps over $X$ to those over the curves.
For example, to treat $\beta_X$, we take curves $C_1,\cdots, C_r$
in $X$ such that the kernel of $H^2(k, \Pic(X^s)) \to \oplus_i H^2(k, \Pic(C_i^s))$ has a finite exponent. This is guaranteed by the property the morphism $\uPic_{X/k} \to \prod_i \uPic_{C_i/k}$ of the Picard functors is a direct summand up to isogeny, which is in turn reduced to similar properties for the connected components and the component groups. 
Then the later properties are solved by the Hodge index theorem for divisors and the weak Lefschetz theorem for the first \'etale cohomology group.

\subsection{Notations and conventions}

We take the following notations and conventions throughout this paper.

\subsubsection*{Fields}

By a \emph{finitely generated field}, we mean a field which is finitely generated over a prime field. 

For any field $k$, denote by $k^s$ (resp. $\bar k$) the separable closure (resp. algebraic closure). Denote by $G_k=\Gal(k^s/k)$ the absolute Galois group of $k$. 

\subsubsection*{Varieties}

By a \emph{variety} over a field $k$, we mean a scheme 
which is separated, \emph{geometrically integral} and of finite type over $k$. By a \emph{curve}, we mean a variety of dimension one. 

For a variety $X$ over a field $k$, we usually denote by $X^s=X_{k^s}$ the base change to the separable closure if the base field $k$ is clear from the context.

\subsubsection*{Cohomology}

The default sheaves and cohomology over varieties are with respect to the small \'etale site.

\subsubsection*{Brauer groups}

For any noetherian scheme $X$, denote the \emph{cohomological Brauer group}
$$\Br(X):=H^2(X, \GG_m)_\tor,$$
which is the torsion part of the \'etale cohomology group. 
Note that if $X$ is regular, which is always the case in this paper, the group $H^2(X, \GG_m)$ is automatically torsion.

\subsubsection*{Abelian groups}
For any abelian group $G$, integer $m$ and prime $\ell$, we introduce the following notations. 
\begin{itemize}
\item $[m]:G\to G$ is the homomorphism given by multiplication by $m$.
\item $G[m]$ is the kernel of $[m]:G\to G$. 
\item $G_\tor$ is the union of $G[m]$ in $G$ over all positive integers $m$.
% \item $G_{\rm free}=G/G_\tor$ is the free part of $G$. 
\item $G[\ell^\infty]$ is the union of $G[\ell^n]$ in $G$ over all $n\geq1$.

% \item If $p$ is a prime,  $G\nonp$ denotes the union of $G[m]$ in $G$ over all integer $m$ coprime to $p$; if $p=0$, set  $G\nonp=G_\tor$. This notation always applies to the situation that $p$ denotes the characteristic of a base field.

\item $\displaystyle T_\ell(G)=\Hom_{\ZZ}(\QQ_\ell/\ZZ_\ell, G)
=\varprojlim_n G[\ell^n]$, where the transition map of the inverse system is given by $[\ell]:G[\ell^{n+1}]\to G[\ell^n]$.

\item $\displaystyle V_\ell(G)=T_\ell(G)\otimes_{\ZZ_\ell}\QQ_\ell$.

% \item $G_{\rm div}=\cup_{\phi\in \Hom_{\ZZ}(\QQ, G)}\phi(\QQ)$ denotes the subgroup of (successively) divisible elements of $G$.
% \item $G_{\rm ndiv}=G/G_{\rm div}$ is the non-divisible part of $G$. 
\end{itemize}

\subsubsection*{Acknowledgment}

The author would like to thank Marco D'Addezio for informing him the paper of Colliot-Th\'el\`ene and Skorobogatov.
The author thanks Yanshuai Qin for pointing out a mistake in an old version of the paper. 
The author would like to express his deep thanks to the Institute for Advanced Study at Tsinghua University.
Part of the paper was written when the author visited the institute in the academic year 2018-2019.

The author is supported by the grant RTG/DMS-1646385 from the National Science Foundation of the USA.

\section{Tate conjecture and geometric Brauer groups}
\label{sect tate brauer}

The goal of this section is to give a complete proof of Theorem \ref{equivalence}.

\subsection{Preliminary results} \label{sec prelim}

In this section, we review some standard results on Picard groups and the Kummer sequence for the \'etale cohomology.

\subsubsection*{Review on Picard groups}
%\label{sect picard}

Here we review some definitions and basic properties about the Picard group, the Neron--Severi group, and the Picard functor.

Let $X$ be a projective scheme over a field $k$. 
Denote by $\Pic^0(X)$ the subgroup of $\Pic(X)$ of algebraically trivial line bundles (cf. \cite[Definition 9.5.9]{Kle}), and define $\NS(X)$ by the exact sequence 
$$
0\lra \Pic^0(X) \lra \Pic(X)\lra  \NS(X)\lra 0.
$$
Note that a line bundle over $X$ is algebraically trivial if and only if it is algebraically trivial over $X^s$.
Therefore, 
$$
\NS(X) =\Im(\Pic(X)\to  \NS(X^s)).
$$

By \cite[$n^\circ$232, \S6]{FGA} or \cite[\S8.2, Thm. 3]{BLR}, 
%By \cite[Theorem 9.4.8]{Kle}, 
the Picard functor $\uPic_{X/k}$ is represented by a group scheme, locally of finite type over $k$. 
Denote by $\uPic^0_{X/k}$ the identity component of (the group scheme representing) $\uPic_{X/k}$.
By \cite[Lem 9.5.1]{Kle}, $\uPic^0_{X/k}$ is a group scheme of finite type over $k$, open and closed in $\uPic_{X/k}$.
If $X$ is geometrically normal, by \cite[Prop. 9.5.3, Thm. 9.5.4]{Kle},  $\uPic^0_{X/k}$ is actually projective over $k$.
In this case, by \cite[$n^\circ$236-16, Cor. 3.2]{FGA}, the reduced structure 
$\uPic^0_{X/k,\red}$ of $\uPic^0_{X/k}$ is an abelian variety over $k$.

There are canonical injections
$$\Pic(X)\lra \uPic_{X/k}(k), \quad \Pic^0(X)\lra \uPic^0_{X/k}(k).$$
They are isomorphisms if $X(k)$ is non-empty
or $k$ is separably closed.
See \cite[\S 8.1, Prop. 4]{BLR} and \cite[Prop 9.5.10, Thm 9.2.5]{Kle}.

By \cite[Exp. XIII, Theorem 5.1]{SGA6}, $\NS(X^s)$ is a finitely generated abelian group. Then $\NS(X)$ is also finitely generated. 

%Consequently, if $k$ is finite, then $\Pic^0(X)$ is finite and $\Pic(X)$ is finitely generated. 

\subsubsection*{The Kummer sequence}

We review the following standard results, and provide a proof for the sake of readers. 

\begin{lem} \label{kummer}
Let $X$ be a projective and smooth variety over a separably closed field $k$ of characteristic $p\geq 0$, and $\ell\neq p$ be a prime number. 
Then the following holds:
\begin{itemize}
\item[(1)] $\Br(X)[\ell]$ is finite.
\item[(2)] There is a canonical exact sequence  
$$
0 \lra \NS(X) \otimes_\ZZ \ZZ_\ell \lra 
H^2(X, \ZZ_\ell(1)) \lra T_\ell \Br(X) \lra 0.
$$
\end{itemize}
\end{lem}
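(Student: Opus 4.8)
The plan is to read everything off the Kummer sequence of \'etale sheaves on $X$
$$1 \lra \muln \lra \GG_m \stackrel{[\ell^n]}{\lra} \GG_m \lra 1,$$
which is exact on the small \'etale site because $\ell \neq \chark$. First I would take cohomology and, using $H^1(X,\GG_m) = \Pic(X)$ together with the fact that $H^2(X,\GG_m) = \Br(X)$ is torsion ($X$ being regular), extract for every $n$ a short exact sequence
$$0 \lra \Pic(X)/\ell^n \lra H^2(X,\muln) \lra \Br(X)[\ell^n] \lra 0.$$
For part (1) this already suffices: $\Br(X)[\ell]$ is a quotient of $H^2(X,\mul)$, and $H^2(X,\mul)$ is finite by the finiteness theorem for \'etale cohomology of a variety over a separably closed field, since $\mul$ is a finite locally constant sheaf. (The same remark shows every $H^2(X,\muln)$ is finite, which I will use for the limit below.)

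The next step is to replace $\Pic(X)/\ell^n$ by $\NS(X)/\ell^n$. Since $X$ is smooth it is geometrically normal, so by the results recalled in \S\ref{sec prelim} the group scheme $\uPic^0_{X/k,\red}$ is an abelian variety $A$ over $k$, and as $k$ is separably closed we have $\Pic^0(X) = \uPic^0_{X/k}(k) = A(k)$. Because $\ell \neq \chark$, multiplication by $\ell^n$ on $A$ is an \'etale isogeny, hence surjective on $k$-points ($k$ being separably closed), so $\Pic^0(X)$ is $\ell$-divisible; tensoring $0 \to \Pic^0(X) \to \Pic(X) \to \NS(X) \to 0$ with $\ZZ/\ell^n$ then gives $\Pic(X)/\ell^n \xrightarrow{\sim} \NS(X)/\ell^n$.

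Finally, for part (2) I would pass to the inverse limit over $n$ in the short exact sequences above. The transition maps are those induced by the morphism of Kummer sequences from level $n+1$ to level $n$ which is $[\ell]$ on the source $\GG_m$ and the identity on the target $\GG_m$; on the subsheaves this is the standard map $\mu_{\ell^{n+1}} \to \muln$, $\zeta \mapsto \zeta^\ell$, so $\varprojlim_n H^2(X,\muln) = H^2(X,\zlone)$ by definition. On the left-hand terms the transition maps become the natural surjections $\Pic(X)/\ell^{n+1} \to \Pic(X)/\ell^n$, so by the previous step and the finite generation of $\NS(X)$ their limit is $\NS(X) \otimes_\ZZ \zl$; on the right-hand terms they become multiplication by $\ell$, so their limit is $T_\ell\Br(X)$. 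Since all the groups involved are finite, the inverse systems satisfy Mittag--Leffler, $\varprojlim^1$ vanishes, and the limit of the short exact sequences is again short exact, which is (2).

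The routine part here is the unwinding of the Kummer sequence; the point that needs real care is the bookkeeping of transition maps, so that the three limits come out as $\NS(X)\otimes\zl$, $H^2(X,\zlone)$ and $T_\ell\Br(X)$ with exactly the conventions fixed in \S\ref{sec prelim}, and so that the $\ell$-divisibility of $\Pic^0(X)$ — which is what makes the geometric term involve $\NS$ rather than $\Pic$ — is used correctly. The only input beyond the Kummer sequence is precisely the representability of the Picard functor and the fact that $\uPic^0_{X/k,\red}$ is an abelian variety.
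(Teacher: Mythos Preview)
Your argument is correct and follows essentially the same route as the paper: extract the Kummer short exact sequence $0\to \Pic(X)/\ell^n\to H^2(X,\muln)\to \Br(X)[\ell^n]\to 0$, use finiteness of $H^2(X,\muln)$ for (1), replace $\Pic(X)/\ell^n$ by $\NS(X)/\ell^n$ via the $\ell$-divisibility of $\Pic^0(X)$, and pass to the inverse limit. Your treatment is in fact slightly more explicit than the paper's, which simply writes ``taking inverse limit'' without spelling out the transition maps or invoking Mittag--Leffler.
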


\begin{proof}

These are consequence of the Kummer sequence
$$
0\lra \Pic(X)/\ell^n \lra H^2(X, \muln) \lra \Br(X)\belln \lra 0.
$$
Note that $H^2(X, \muln)$ is finite, so $\Br(X)\belln$ is finite for any $n$.
% This proves (1) by combining the group-theoretical results recalled in Fact \ref{structure}. 

For the second statement, the morphism $[\ell]:\Pic^0 _{X/ k}\to \Pic^0 _{X/ k}$ is finite, \'etale and surjective, and thus it is surjective on $k$-points (as $k$ is separably closed). 
In other words, $[\ell]:\Pic^0(X)\to \Pic^0(X)$ is surjective.
This implies an isomorphism $\Pic(X)/\ell^n \to \NS(X)/\ell^n$. 
Then the exact sequence becomes
$$
0\lra \NS(X)/\ell^n \lra H^2(X, \muln) \lra \Br(X)\belln \lra 0.
$$
Taking inverse limit, we give the exact sequence in (2). 
\end{proof}

The first part of the lemma is often combined with the following basic results in group theory.

\begin{fact} \label{structure}
Let $\ell$ be a prime and $G$ be an abelian $\ell$-group which is co-finite in the sense  that $G[\ell]$ is finite. Then the following holds:
\begin{itemize}
\item[(1)] $G$ is isomorphic to $(\QQ_\ell/\ZZ_\ell)^r\oplus G_0$ for some (finite) integer $r\geq0$ and some finite $\ell$-group $G_0$. See \cite[Thm. 3, Thm. 4, Thm. 9]{Kap}. 
\item[(2)] $T_\ell G$ is a free $\zl$-module of finite rank. 
\item[(3)] If $G\to H$ is a surjective homomorphism, and $H$ has a finite exponent, then 
$H$ is a finite group. 
\end{itemize}
\end{fact}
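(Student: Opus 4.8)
The plan is to take part (1) essentially as a known result in the structure theory of infinite abelian groups and to read off (2) and (3) from it. For (1), I would recall that the maximal divisible subgroup $D$ of $G$ is a direct summand (divisible abelian groups are injective $\ZZ$-modules), so $G = D \oplus R$ with $R$ reduced; a divisible $\ell$-group is a direct sum of copies of $\QQ_\ell/\ZZ_\ell$, and since $D[\ell] \subseteq G[\ell]$ is finite the number of copies is a finite integer $r$, so $D \cong (\QQ_\ell/\ZZ_\ell)^r$. It then remains to show that the reduced $\ell$-group $R$, which still has $R[\ell]$ finite, is finite: a basic subgroup $B$ of $R$ is pure in $R$, is a direct sum of finite cyclic $\ell$-groups, and has $R/B$ divisible; finiteness of $B[\ell] \subseteq R[\ell]$ forces $B$ to have only finitely many cyclic summands, hence to be finite, hence bounded; a bounded pure subgroup is a direct summand, so $R \cong B \oplus (R/B)$ with $R/B$ divisible, and reducedness of $R$ gives $R = B$. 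All of this is classical and is precisely what the cited theorems of Kaplansky provide, so I would simply cite \cite{Kap} rather than reproduce the argument.

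Given (1), part (2) is formal: applying the additive functor $T_\ell(-) = \Hom_{\ZZ}(\QQ_\ell/\ZZ_\ell,-) = \varprojlim_n (-)[\ell^n]$ to $G \cong (\QQ_\ell/\ZZ_\ell)^r \oplus G_0$, and using $T_\ell(\QQ_\ell/\ZZ_\ell) = \ZZ_\ell$ together with the vanishing of $T_\ell$ on a finite $\ell$-group (there the transition maps $[\ell]$ of the inverse system are eventually nilpotent), one gets $T_\ell G \cong \ZZ_\ell^{\,r}$, which is free of finite rank.

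For part (3), a quotient of an $\ell$-group is an $\ell$-group, so $H$ is an abelian $\ell$-group, and $\ell^n H = 0$ means the surjection $G \to H$ factors through $G/\ell^n G$; since $\QQ_\ell/\ZZ_\ell$ is divisible, (1) gives $G/\ell^n G \cong G_0/\ell^n G_0$, which is finite, hence so is its quotient $H$. If one wishes to keep (3) independent of (1): passing to finitely generated --- hence finite --- subgroups $G' \subseteq G$, for which $\dim_{\FF_\ell} G'/\ell G' = \dim_{\FF_\ell} G'[\ell]$, one sees $\dim_{\FF_\ell} G/\ell G \le \dim_{\FF_\ell} G[\ell] < \infty$; then an induction on $n$ using $0 \to \ell G/\ell^n G \to G/\ell^n G \to G/\ell G \to 0$, with $(\ell G)[\ell] \subseteq G[\ell]$, shows every $G/\ell^n G$ is finite, and (3) follows as before.

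The only step carrying genuine content is the finiteness of the reduced part $R$ in (1), which rests on the theory of basic subgroups and on bounded pure subgroups being direct summands. Since these are exactly the classical facts recorded in Kaplansky's monograph, I do not anticipate a real obstacle, and the write-up should be short.
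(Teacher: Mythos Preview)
Your proposal is correct and aligns with the paper's treatment: the paper states this as a \emph{Fact} without proof, merely citing Kaplansky for (1) and leaving (2) and (3) as immediate consequences. Your write-up follows the same route---take (1) from the structure theory in \cite{Kap} and read off (2) and (3) from the decomposition---only with more detail than the paper supplies; the extra sketch of the basic-subgroup argument for (1) and the alternative direct proof of (3) are fine but can be omitted to match the paper's level of brevity.
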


\subsection{The Galois invariants}

Now we prove Theorem \ref{equivalence}.
Let $(X,k,\ell)$ be as in the theorem. 
By Lemma \ref{kummer}, we have an exact sequence
$$
0 \lra \NS(X^s)_{\QQ_\ell} \lra 
H^2(X^s, \QQ_\ell(1)) \lra V_\ell \Br(X^s) \lra 0.
$$
We first prove that the sequence has a $G_k$-equivariant splitting. 
It suffices to treat the case $\dim X>1$.

Fix an ample line bundle $L$ over $X$. 
Consider the intersection pairing 
$$
\NS(X^s)_{\QQ_\ell} \times \NS(X^s)_{\QQ_\ell} \lra \QQ_\ell,\quad
(D,E)\longmapsto D\cdot E\cdot L^{\dim X-2},
$$
and the cup-product pairing 
$$
H^2(X^s, \QQ_\ell(1))\times H^2(X^s, \QQ_\ell(1)) \lra \ql,\quad
(\alpha,\beta)\longmapsto \alpha\cup\beta\cup c_1(L)^{\dim X-2}.
$$
The pairings are compatible. 
By \cite[XIII, Thm. 4.6(i)(vi)]{SGA6}, the intersection pairing is non-degenerate. 
Therefore, we can take the orthogonal complement of 
$\NS(X^s) _{\QQ_\ell} $ in $H^2(X^s, \QQ_\ell(1))$
to get a splitting of the sequence.
Note that the cup-product pairing is also non-degenerate by the hard Lefschetz theorem of Deligne \cite[Thm. 4.1.1]{Del}, but the splitting only needs the non-degeneracy of the intersection pairing.

Once the exact sequence is split as $G_k$-modules, taking 
$G_k$-invariants gives an exact sequence
$$
0 \lra \NS(X^s)^\gk \otimes_\ZZ \QQ_\ell \lra 
H^2(X^s, \QQ_\ell(1))^\gk \lra (V_\ell \Br(X^s))^\gk \lra 
0.$$
It suffices to prove both of the natural maps 
$$
\NS(X) \lra \NS(X^s)^{G_k}, \quad\
T_\ell (\Br(X^s)^\gk) \lra (T_\ell \Br(X^s))^\gk
$$
have finite kernels and cokernels. 

We first treat the map
$$
\NS(X) \lra \NS(X^s)^{G_k}.
$$
It is injective by definition. For its cokernel, take $G_k$-invariants of the exact sequence
$$
0\lra \Pic^0(X^s) \lra \Pic(X^s)\lra  \NS(X^s)\lra 0.
$$
We have an exact sequence
$$
\Pic(X^s)^\gk \lra  \NS(X^s)^\gk \lra H^1(G_k, \Pic^0(X^s)).
$$
The last arrow has a finite image, since 
$\NS(X^s)$ is finitely generated and $H^1(G_k, \Pic^0(X^s))$ is torsion. 
Then it suffices to prove that 
$\Pic(X) \to \Pic(X^s)^\gk$
has a torsion cokernel.
The Hochschild--Serre spectral sequence 
$$H^a(G_k, H^b(X^s, \GG_m))
\Longrightarrow H^{a+b}(X, \GG_m)$$
induces an exact sequence
$$\Pic(X)\lra \Pic(X^s)^\gk \lra \Br(k)\lra \Br(X).$$
The cokernel of the first map is torsion, since $\Br(k)$ is torsion. 
This treats the first map.

The second map 
$$
T_\ell (\Br(X^s)^\gk) \lra (T_\ell \Br(X^s))^\gk
$$
is actually an isomorphism. 
In fact, it is injective by taking the Tate module of the inclusion $\Br(X^s)^\gk \to \Br(X^s)$. For the surjectivity, denote 
$$M:=T_\ell \Br(X^s)=T_\ell (\Br(X^s)[\ell^\infty]_\div).$$ 
Then there is a canonical isomorphism 
$$M\otimes (\ql/\zl) \lra \Br(X^s)[\ell^\infty]_\div.$$
Since $M$ is a free $\ZZ_\ell$-module of finite rank (cf. Lemma \ref{kummer} and Fact \ref{structure}), $M^\gk$ is saturated in $M$; i.e., $M/M^\gk$ is torsion-free. It follows that $M^\gk$ is a direct summand of $M$ as $\ZZ_\ell$-modules. As a consequence, 
$$M^\gk \otimes (\ql/\zl) \lra M \otimes (\ql/\zl)$$
is injective.
Thus we have an injection
$$M^\gk\otimes (\ql/\zl) \lra \Br(X^s)[\ell^\infty]_\div,$$
which induces an injection
$$M^\gk \otimes (\ql/\zl) \lra \Br(X^s)^\gk[\ell^\infty].$$
Taking Tate modules, this gives an injection 
$$M^\gk \lra T_\ell(\Br(X^s)^\gk).$$
This gives an inverse of the original map. 
The proof of Theorem \ref{equivalence} is complete.

\section{Comparison of the Brauer groups}

The goal of this section is to prove Theorem \ref{comparison}.
Start with the Hochschild--Serre spectral sequence
$$
E_2^{a,b}= H^a(k, H^b(X^s, \GG_m))
\Longrightarrow  H^{a+b}(X, \GG_m).
$$
It gives a filtration of $\Br(X)$ whose successive quotients are
$$
E_\infty^{2,0}=\mathrm{coker}(\Pic(X^s)^{G_k}\to \Br(k)),
$$
$$
E_\infty^{1,1}=\ker(\alpha_X:H^1(k, \Pic(X^s))\to H^3(k,\GG_m)),
$$
$$
E_\infty^{0,2}=\ker(\gamma_X: E_3^{0,2}\to E_3^{3,0}),
$$
where 
$$
E_3^{0,2}=\ker(\beta_X: \Br(X^s)^{G_k}\to H^2(k, \Pic(X^s)),
$$
$$
E_3^{3,0}=\coker(\alpha_X:H^1(k, \Pic(X^s))\to H^3(k,\GG_m)).
$$
This gives a canonical long exact sequence
\begin{multline*}
0\lra \ker(\alpha_X:H^1(k, \Pic(X^s))\to H^3(k,\GG_m)) \\
\lra \Br(X)/\Br(k) \stackrel{ }{\lra} 
E_3^{0,2}\stackrel{\gamma_X}{\lra} E_3^{3,0}
\lra H^3(X,\GG_m).
\end{multline*}
Here the last arrow is added because of 
$$
E_\infty^{3,0}=E_4^{3,0}=\coker(\gamma_X:E_3^{0,2}\to E_3^{3,0}).
$$
The key is the following result on the connecting maps.

\begin{lem} \label{connecting map}
The images of all the canonical maps 
$$\alpha_X:H^1(k, \Pic(X^s))\lra H^3(k,\GG_m),$$
$$\beta_X:\Br(X^s)^{G_k} \lra H^2(k, \Pic(X^s)),$$ 
and
$$\gamma_X:E_3^{0,2}\lra E_3^{3,0}$$ 
have finite exponents.  
\end{lem}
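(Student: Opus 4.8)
The plan is to handle the three maps in increasing order of difficulty, reducing the hard case ($\beta_X$) to the trivial case of curves via a pull-back trick. For $\alpha_X$ and $\gamma_X$, the key input is that the kernel of the restriction map $H^3(k,\GG_m)\to H^3(X,\GG_m)$ has finite exponent. Indeed, fixing a closed point $x\in X$ of degree $d$ over $k$ (or more robustly, using that $X$ is projective so it has a closed point of some degree $d$, possibly after noting $X$ is geometrically integral hence has points of bounded degree controlled by the dimension), the composite $H^3(k,\GG_m)\to H^3(X,\GG_m)\to H^3(x,\GG_m)=H^3(k(x),\GG_m)$ followed by a corestriction back to $H^3(k,\GG_m)$ is multiplication by $d$; hence $\ker(H^3(k,\GG_m)\to H^3(X,\GG_m))$ is killed by $d$. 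Since the edge maps of the Hochschild--Serre spectral sequence identify $\Im(\alpha_X)$ (up to the relevant subquotients) with a subgroup of this kernel, $\alpha_X$ has finite exponent; and $E_3^{3,0}$ is a quotient of $\ker(H^3(k,\GG_m)\to H^3(X,\GG_m))$ by $\Im(\alpha_X)$, so it too has finite exponent, whence so does $\Im(\gamma_X)$.

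The main obstacle is bounding $\Im(\beta_X:\Br(X^s)^{G_k}\to H^2(k,\Pic(X^s)))$. Here I would use the pull-back trick of Colliot-Th\'el\`ene and Skorobogatov: when $\dim X=1$ one has $\Br(X^s)=0$ by Tsen's theorem, so $\beta_X=0$. For general $X$, choose smooth projective curves $C_1,\dots,C_r$ with morphisms $C_i\to X$ (e.g.\ obtained by intersecting $X$ with generic hyperplanes, after a projective embedding, and normalizing) such that the induced map $\Pic(X^s)\to\bigoplus_i\Pic(C_i^s)$ has kernel and cokernel of finite exponent as $G_k$-modules. By functoriality of the Hochschild--Serre spectral sequence in $X$, the map $\beta_X$ fits into a commutative square with $\beta_{C_i}=0$ and the map $H^2(k,\Pic(X^s))\to\bigoplus_i H^2(k,\Pic(C_i^s))$; therefore $\Im(\beta_X)$ lands in the kernel of the latter map. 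This kernel has finite exponent: the long exact sequences in $H^\bullet(k,-)$ attached to the two short exact sequences $0\to(\ker)\to\Pic(X^s)\to\Im\to0$ and $0\to\Im\to\bigoplus_i\Pic(C_i^s)\to(\coker)\to0$, combined with the fact that $\ker$ and $\coker$ are $G_k$-modules of finite exponent (so their cohomology is of finite exponent), show that $\ker\bigl(H^2(k,\Pic(X^s))\to\bigoplus_i H^2(k,\Pic(C_i^s))\bigr)$ is annihilated by a fixed integer. Hence $\Im(\beta_X)$ has finite exponent.

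It remains to construct the curves $C_i$ with the stated property, which is the technical heart. I would reduce the claim on $\Pic(X^s)\to\bigoplus_i\Pic(C_i^s)$ to separate statements on the divisible part and the finitely generated part: the map $\uPic^0_{X/k}\to\prod_i\uPic^0_{C_i/k}$ of abelian varieties is an isogeny onto its image (equivalently a direct summand up to isogeny) once the curves are taken so that $\Pic^0(X^s)$ injects up to isogeny — this follows from the weak Lefschetz theorem for $H^1_{\et}(-,\QQ_\ell)$, which gives $H^1(X^s,\QQ_\ell)\hookrightarrow H^1(C^s,\QQ_\ell)$ for $C$ a smooth complete intersection curve of sufficiently positive degree, hence an isogeny $\uPic^0_{X/k}\to\uPic^0_{C/k}$ onto its image; and the map on N\'eron--Severi groups $\NS(X^s)\to\bigoplus_i\NS(C_i^s)=\bigoplus_i\ZZ$ is controlled by the Hodge index theorem for divisors, which shows that intersecting with a basis of curve classes (realized as complete intersections with appropriate ample classes) gives a map with finite kernel and cokernel after tensoring with $\QQ$. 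Assembling these via the snake lemma on $0\to\Pic^0\to\Pic\to\NS\to0$ for $X^s$ and each $C_i^s$ yields that $\Pic(X^s)\to\bigoplus_i\Pic(C_i^s)$ has kernel and cokernel of finite exponent as $G_k$-modules, completing the argument. Care is needed over non-closed and positive-characteristic base fields — one works with the Picard schemes and their reduced identity components, uses that all constructions descend to a finite extension of $k$ and then corestricts — but this introduces only finite-exponent ambiguities, which is exactly what the lemma allows.
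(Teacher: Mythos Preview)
Your overall strategy matches the paper's: bound $\alpha_X,\gamma_X$ via the finite-exponent kernel of $H^3(k,\GG_m)\to H^3(X,\GG_m)$ using a closed point and restriction--corestriction, and bound $\beta_X$ by the Colliot-Th\'el\`ene--Skorobogatov pull-back to curves. Two points need correction.

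\medskip

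\textbf{On $\gamma_X$.} You claim that $E_3^{3,0}$ is a quotient of $\ker\bigl(H^3(k,\GG_m)\to H^3(X,\GG_m)\bigr)$ and hence has finite exponent. This is false: $E_3^{3,0}=H^3(k,\GG_m)/\Im(\alpha_X)$, which is in general not of finite exponent. What the paper uses is the exact sequence $E_3^{0,2}\stackrel{\gamma_X}{\to}E_3^{3,0}\to H^3(X,\GG_m)$ coming from $E_\infty^{3,0}=\coker(\gamma_X)\hookrightarrow H^3(X,\GG_m)$; thus $\Im(\gamma_X)=\ker\bigl(E_3^{3,0}\to H^3(X,\GG_m)\bigr)=\ker\bigl(H^3(k,\GG_m)\to H^3(X,\GG_m)\bigr)/\Im(\alpha_X)$, which does have finite exponent. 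This is a minor slip in your write-up; the ingredients are there.

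\medskip

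\textbf{On $\beta_X$.} Here there is a genuine gap. You assert that one can choose curves $C_i\to X$ so that $\Pic(X^s)\to\bigoplus_i\Pic(C_i^s)$ has \emph{cokernel} of finite exponent, and your long-exact-sequence argument for bounding $\ker\bigl(H^2(k,\Pic(X^s))\to\bigoplus_i H^2(k,\Pic(C_i^s))\bigr)$ relies on this. But your own construction only yields (correctly, via weak Lefschetz) that $\uPic^0_{X/k,\red}\to\prod_i\uPic^0_{C_i/k}$ has finite kernel, i.e.\ is an isogeny onto its image. The image is in general a proper abelian subvariety, so the cokernel contains a nonzero abelian variety and is \emph{not} of finite exponent; the snake lemma applied to $0\to\Pic^0\to\Pic\to\NS\to 0$ then shows that the cokernel of $\Pic(X^s)\to\bigoplus_i\Pic(C_i^s)$ is likewise not of finite exponent. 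Your two short-exact-sequence argument therefore breaks down at the step invoking finiteness of the cokernel.

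The paper repairs this exactly as your parenthetical ``direct summand up to isogeny'' suggests, but one must actually use the summand: by Poincar\'e complete reducibility there is an abelian subvariety $A\subset\prod_i\uPic^0_{C_i/k}$ such that
\[
\uPic_{X/k,\red}\times A\ \lra\ \prod_i\uPic_{C_i/k}
\]
has finite kernel and finite cokernel (combining the $\Pic^0$ statement with the Hodge-index argument on the N\'eron--Severi groups). Now your two-short-exact-sequence argument applies to this augmented map and gives that
\[
\ker\Bigl(H^2(k,\Pic(X^s))\oplus H^2(k,A(k^s))\ \lra\ \bigoplus_i H^2(k,\Pic(C_i^s))\Bigr)
\]
has finite exponent; a fortiori so does $\ker\bigl(H^2(k,\Pic(X^s))\to\bigoplus_i H^2(k,\Pic(C_i^s))\bigr)$, which contains $\Im(\beta_X)$.
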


Now the lemma implies Theorem \ref{comparison}(1)(2)(3) immediately. 
In fact, apply Lemma \ref{connecting map} to the long exact sequence. 
Theorem \ref{comparison}(1) is immediately obtained by the lemma for $\alpha_X$.
On the other hand, by the lemma for $\gamma_X$, the canonical map 
$$\Br(X)/\Br(k) \stackrel{ }{\lra} 
E_3^{0,2}$$
has a cokernel of finite exponent.
By the lemma for $\beta_X$, the cokernel of $E_3^{0,2}\to \Br(X^s)^{G_k}$ has a finite exponent. 
Then the cokernel of
$$\Br(X)/\Br(k) \stackrel{ }{\lra} 
\Br(X^s)^{G_k}$$
also has a finite exponent.
Then Theorem \ref{comparison}(2)(3) follows from
Lemma \ref{kummer} and Fact \ref{structure}(3).

\subsection{Extra arguments over finite fields}

The goal here is to deduce Theorem \ref{comparison}(4), which asserts that if $k$ is a finite field, then the canonical map $\Br(X) \to \Br(X^s)^{G_k}$
has a finite kernel and a finite cokernel.
We first list the following well-known facts for Galois cohomology of finite fields.

\begin{fact} \label{cohomology finite field}
Let $k$ be a finite field, and $M$ be a discrete $G_k$-module.
\begin{enumerate}[(1)]
\item $H^i(G_k,M)=0$ if $i\geq2$ and $M$ is a torsion abelian group. This follows from the fact that the cohomological dimension of $k$ is 1. See \cite[II,\S3.3,(a)]{Ser} for example. 
\item $H^i(k,\GG_m)=0$ for any $i\geq2$. This is an example of the above result.  
\item $H^1(G_k,M)$ is finite if $M$ is a finitely generated abelian group.
In fact, since $H^1(G_k,M)$ is torsion, it suffices to check that it is finitely generated. 
This is easily seen by the crossed homomorphisms, since $G_k$ is topologically generated by one element. 
\item $H^2(G_k,M)[n]$ is finite for any positive integer $n$ if $M$ is an abelian group with a finitely generated free part $M/M_\tor$. In fact, taking cohomology of $0\to M_\tor\to M \to M/M_\tor\to 0$, we can assume that $M$ is a free $\ZZ$-module of finite rank by (1). Taking cohomology of $0\to M \stackrel{n}{\to} M \to M/n \to 0$, the result follows by (3). 
\end{enumerate}
\end{fact}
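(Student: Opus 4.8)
The plan is to prove the four assertions in the order listed, since (2) is a special case of (1) and (4) reduces to (3); throughout, the only structural input is that the absolute Galois group of a finite field is $G_k \cong \hat{\ZZ}$, procyclic and topologically generated by the Frobenius $\Frob$, of cohomological dimension $1$ on torsion coefficients. For (1) I would cite the standard fact that a finite field has cohomological dimension $1$ (Serre, \emph{loc. cit.}): by definition this says $H^i(G_k, M) = 0$ for every $i \geq 2$ and every torsion $G_k$-module $M$. For (2) I observe that over $\bar{k} = \bar{\FF}_p$ every nonzero element is a root of unity, so $\GG_m(\bar{k}) = \bar{k}^\times$ is a torsion $G_k$-module; hence $H^i(k, \GG_m) = H^i(G_k, \bar{k}^\times) = 0$ for $i \geq 2$ follows immediately from (1).

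For (3) the argument splits into two independent steps. First, $H^1(G_k, M)$ is torsion: for any profinite group and any degree $i \geq 1$, continuous cohomology with values in a discrete module is the direct limit over open normal subgroups $U$ of the cohomology of the finite quotients $G_k/U$, each of which is annihilated by $|G_k/U|$ in positive degree. Second, $H^1(G_k, M)$ is finitely generated whenever $M$ is: since $G_k$ is topologically generated by $\Frob$, evaluation $f \mapsto f(\Frob)$ sends a continuous $1$-cocycle to an element of $M$, and the cocycle relation $f(\sigma^n) = \sum_{i=0}^{n-1} \sigma^i f(\sigma)$ shows that a cocycle vanishing at $\Frob$ vanishes on the dense subgroup $\langle \Frob \rangle$, hence everywhere by continuity; thus the induced map $H^1(G_k, M) \to M$ is injective. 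A finitely generated torsion abelian group is finite. The one point requiring a little care — and the closest thing to an obstacle — is exactly this injectivity: one must combine the density of $\langle \Frob \rangle$ in $\hat{\ZZ}$ with the continuity of the cocycle to conclude that its value at $\Frob$ determines it completely.

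For (4) I would first strip off torsion and then invoke (3). The long exact sequence attached to $0 \to M_\tor \to M \to M/M_\tor \to 0$, combined with the vanishing $H^i(G_k, M_\tor) = 0$ for $i \geq 2$ from (1), yields $H^2(G_k, M) \cong H^2(G_k, M/M_\tor)$, so I may assume $M$ is a finitely generated free $\ZZ$-module. The long exact sequence attached to $0 \to M \xrightarrow{n} M \to M/nM \to 0$ then exhibits $H^2(G_k, M)[n]$, the kernel of multiplication by $n$, as the image of the connecting map out of $H^1(G_k, M/nM)$; since $M/nM$ is finite, part (3) makes this group finite, and hence $H^2(G_k, M)[n]$ is finite. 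None of these steps is genuinely hard, as the statement itself already sketches the arguments.
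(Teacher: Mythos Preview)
Your proof is correct and follows essentially the same route as the paper's own sketch: (1) by cohomological dimension, (2) as a special case via $\bar k^\times$ being torsion, (3) by torsionness plus the injection $H^1(G_k,M)\hookrightarrow M$ from evaluation at Frobenius, and (4) by stripping off $M_\tor$ via (1) and then bounding $H^2(G_k,M)[n]$ by $H^1(G_k,M/nM)$ via (3). You simply supply the details the paper leaves implicit.
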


Return to the proof of Theorem \ref{comparison}(4).
By $H^2(k,\GG_m)=H^3(k,\GG_m)=0$, the exact sequence associate to the spectral sequence simplifies as
$$
0\lra H^1(k, \Pic(X^s))
\lra \Br(X) \stackrel{ }{\lra} 
\Br(X^s)^{G_k}\stackrel{\beta_X}{\lra} H^2(k, \Pic(X^s)).
$$
It suffices to prove that $H^1(k, \Pic(X^s))$ and $\Im(\beta_X)$ are both finite. 

We first prove that $\Im(\beta_X)$ is finite.
By Lemma \ref{connecting map}(2), $\Im(\beta_X)$ has a finite exponent.
Note that it is a subgroup of $H^2(k, \Pic(X^s))$.
We claim that the free part of $\Pic(X^s)$ is finitely generated, so that we can apply Fact \ref{cohomology finite field}(4) to get the finiteness. 
Consider the exact sequence
$$
0\lra \Pic^0(X^s) \lra \Pic(X^s)\lra \NS(X^s)\lra 0. 
$$
As recalled in \S\ref{sec prelim}, $\NS(X^s)$ is finitely generated. Moreover, $\Pic^0(X^s)$ is torsion, since it is the $k^s$-point of the abelian variety $\underline{\Pic}^0_{X/k,\red}$.
This proves that the image of $\beta_X$ is finite. 

Now we prove that $H^1(k, \Pic(X^s))$ is finite. 
By the exact sequence 
$$
0\lra \Pic^0(X^s) \lra \Pic(X^s)\lra \NS(X^s)\lra 0,
$$
we have an exact sequence
$$H^1(k, \Pic^0(X^s))\lra H^1(k, \Pic(X^s))\lra H^1(k, \NS(X^s)).$$
The key is the vanishing theorem $H^1(k, \Pic^0(X^s))=0$
by Lang \cite[Thm. 2]{Lan} applied to the abelian variety $\underline{\Pic}^0_{X/k,\red}$. 
As $\NS(X^s)$ is finitely generated, $H^1(k, \NS(X^s))$ is finite by Fact \ref{cohomology finite field}(2). 
This proves Theorem \ref{comparison}(4).

\subsection{The images of $\alpha_X$ and $\gamma_X$}

We first prove Lemma \ref{connecting map} for $\alpha_X$ and $\gamma_X$; i.e., the images of
$$\alpha_X:H^1(k, \Pic(X^s))\lra H^3(k,\GG_m)$$
and
$$\gamma_X:E_3^{0,2}\lra E_3^{3,0}$$
have finite exponents, where
$$
E_3^{0,2}=\ker(\beta_X: \Br(X^s)^{G_k}\to H^2(k, \Pic(X^s)),
$$
$$
E_3^{3,0}=\coker(\alpha_X:H^1(k, \Pic(X^s))\to H^3(k,\GG_m)).
$$

Both results are easy consequences of the fact that the canonical map 
$$H^3(k,\GG_m)\lra H^3(X,\GG_m)$$
has a finite exponent. 
We first check the fact by a standard argument using a multi-section. 
In fact, let $P\in X$ be a closed point whose residue field $k'$ is finite and separable over $k$. 
It suffices to prove that the kernel of the composition
$$
H^3(k,\GG_m)\lra H^3(X,\GG_m) \lra H^3(k',\GG_m)
$$
has a finite exponent. 
It is known that the kernel of 
$$
\Res_{k'/k}:H^3(k, \GG_m) \lra H^3(k', \GG_m)
$$
has exponent $[k':k]$, as its composition with 
$$
\mathrm{Cores}_{k'/k}:H^3(k', \GG_m) \lra H^3(k, \GG_m)
$$
gives $\mathrm{Cores}_{k'/k}\circ \Res_{k'/k}=[k':k]$. 
Here $\mathrm{Cores}_{k'/k}$ is the co-restriction map in group cohomology (cf. \cite[Cor. I.5.7]{NSW}).

Now we prove that the image of $\beta_X$ has a finite exponent. 
Recall that the spectral sequence gives a canonical exact sequence
$$
E_3^{0,2}\stackrel{\beta_X}{\lra} E_3^{3,0}\lra H^3(X,\GG_m),
$$
where
$$
E_3^{3,0}=\coker(\alpha_X:H^1(k, \Pic(X^s))\to H^3(k,\GG_m)).
$$
It suffices to prove that the kernel of $E_3^{3,0}\to H^3(X,\GG_m)$
has a finite exponent. 
This is implied by the fact that the canonical composition 
$$H^3(k,\GG_m)\lra E_3^{3,0}\lra H^3(X,\GG_m)$$ 
has a kernel of finite exponent.
Thus the image of $\beta_X$ has a finite exponent. 

On the other hand, in the composition, $\Im(\alpha_X)$ is contained in the kernel of $H^3(k,\GG_m)\to H^3(X,\GG_m)$. So it also has a finite exponent.

\subsection{The image of $\beta_X$}

Now we prove the remaining part of Lemma \ref{connecting map}, which asserts that 
the image of  
$$\beta_X:\Br(X^s)^{G_k} \lra H^2(k, \Pic(X^s))$$ 
has a finite exponent.

As mentioned in the introduction, the key idea is a pull-back trick of 
Colliot-Th\'el\`ene and Skorobogatov \cite{CTS1}.
It is based on the observation that if $\dim X=1$, then $\Br(X^s)=0$ and thus the connecting map $\beta_X$ is zero. 
For general $\dim X$, we take suitable curves in $X$ and apply the functoriality of the connecting maps to relate the maps over $X$ to those over the curves.

Let $k'$ be a finite separable extension of $k$. We claim that the result for $X_{k'}$ implies that for $X$. 
In fact, by the commutative diagram
$$
\xymatrix{
&
\quad \Br(X_{k'^s})^{G_{k'}} \quad  \ar[r]^{\beta_{X_{k'}}\ }
&
\quad H^2(k', \Pic(X_{k'^s})) \quad 
\\
&
\quad \Br(X_{k^s})^{G_k} \quad \ar[r]^{\beta_{X}\ } \ar[u]
&
\quad H^2(k, \Pic(X_{k^s})) \quad \ar[u]
}
$$
it suffices to prove that the kernel of the restriction map
$$
\Res_{k'/k}:H^2(k, \Pic(X_{k^s})) \lra H^2(k', \Pic(X_{k'^s}))
$$
has a finite exponent. 
Using the corestriction map, this is similar to the result about $H^3(k, \GG_m)\to H^3(k', \GG_m)$ above.

As a consequence of the claim, we can replace $k$ by any finite separable extension $k'$ in the proof.
In particular, we can assume that the natural map 
$$\NS(X) \lra \NS(X^s)$$
is an isomorphism. 
% Moreover, we can assume that $\Pic^0 _{X/k, \mathrm{red}}$ is geometrically reduced, 
% so  it is an abelian variety over $k$.

Denote $r=\rank\, \NS(X)$.
Take very ample line bundles $L,L_1,\cdots, L_r$ over $X$, such that the images of $L_1,\cdots, L_r$ in $\NS(X)$ are linearly independent over $\ZZ$. 
For $i=1,\cdots, r$, let $C_i$ be a projective and smooth curve in $X$ obtained as successive hyperplane sections associated to powers of line bundles $L,L,\cdots, L, L_i$. 
Here $L$ is used $n-2$ times for $n=\dim X$. As above, the existence of $C_i$ follows from Bertini's theorem if $k$ is infinite. 
If $k$ is finite, Bertini's theorem works by replacing $k$ by a suitable finite extension, or
we can apply the result of Gabber \cite{Gab} or Poonen \cite{Poo} without changing $k$.
By construction, $C_i$ is numerically equivalent to $m_i\, L^{n-2}\cdot L_i$
for some positive integer $m_i$.

Consider the diagram:
$$
\xymatrix{
&
\quad \Br(X^s)^{G_k} \quad  \ar[d] \ar[r]^{\beta_X\quad}
&
\quad H^2(k, \Pic(X^s)) \quad \ar[d]_{}
\\
&
\quad \oplus_i \Br(C^s_i)^{G_k} \quad \ar[r]^{\oplus_i\beta_{C_i}\quad}
&
\quad \oplus_i H^2(k, \Pic(C^s_i)) \quad
}
$$
Here the vertical arrows are given by pull-backs via the immersion $C_i\to X$.
The diagram is commutative by functoriality of the spectral sequence. 
By \cite[Cor. 5.8]{Gro3}, $\Br(C^s_i)=0$. 
Then it suffices to prove that 
the kernel of the canonical map 
$$
H^2(k, \Pic(X^s)) \lra \oplus_i H^2(k, \Pic(C^s_i))
$$
has a finite exponent for general $k$ and is finite for finite $k$.

The above map is induced by the map 
$$
\Pic(X^s) \lra \oplus_i \Pic(C^s_i),
$$
which is actually the $k^s$-points of the morphism
$$
\tau: \uPic_{X/k,\red} \lra \prod_i \uPic_{C_i/k}
$$
of the Picard schemes.
We claim that there is an abelian subvariety $A$ of $\prod_i \uPic_{C_i/k}$ such that the induced morphism 
$$
\tau':\uPic_{X/k,\red}\times A \lra \prod_i \uPic_{C_i/k}
$$
has a finite kernel and a finite cokernel (as group schemes over $k$). 

Assuming the existence of $A$. Then $\tau'$ gives two short exact sequences by taking its kernel, image and cokernel. 
Taking cohomology, we see that the kernel of the canonical map 
$$
H^2(k, \Pic(X^s))\oplus H^2(k,A) \lra \oplus_i H^2(k, \Pic(C^s_i))
$$
has a finite exponent for general $k$. Then the same finiteness property holds for the kernel of the canonical map 
$$
H^2(k, \Pic(X^s)) \lra \oplus_i H^2(k, \Pic(C^s_i)).
$$

For the existence of $A$, we first claim that the canonical map 
$$
\phi:\NS(X^s) \lra  \oplus_{i=1}^r \NS(C_i^s),
$$
which is the map between the component groups induced by $\tau$,
has a finite kernel and a finite cokernel. 
In fact, there is a canonical isomorphism $\deg:\NS(C_i^s)\to \ZZ$.
Under this isomorphism, the above map is given by  
$$
\alpha\longmapsto (m_1\, L^{n-2}\cdot L_1\cdot \alpha,\cdots, m_r\, L^{n-2}\cdot L_r\cdot \alpha).
$$
Then it has a finite kernel by the non-degeneracy of
the intersection pairing 
$$
\NS(X^s)_{\QQ} \times \NS(X^s)_{\QQ} \lra \QQ,\quad
(D,E)\longmapsto D\cdot E\cdot L^{n-2}.
$$
See \cite[XIII, Thm. 4.6(i)(vi)]{SGA6}. 
Then $\phi$ has a finite cokernel by comparing the ranks of its source and its target. 

By the claim, to prove the existence of $A$, it suffices to prove that there is an abelian subvariety $B$ of $\prod_i \uPic^0_{C_i/k}$ such that the induced morphism 
$$
\tau'':\uPic^0_{X/k,\red}\times B \lra \prod_i \uPic^0_{C_i/k}
$$
has a finite kernel and a finite cokernel. 
In fact, if $B$ exists, we can simply set $A=B$. 

Finally, we prove the existence of $B$.
Note that the morphism 
$$
\uPic^0_{X/k,\red} \lra \prod_i \uPic^0_{C_i/k}
$$
is a homomorphism of abelian varieties.
By Poincare's complete reducibility theorem, it suffices to prove that the kernel of the morphism is finite. 
Then it suffices to prove that the kernel of the morphism 
$\uPic^0_{X/k,\red} \to \uPic^0_{C_1/k}$ is finite. 
By the weak Lefschetz theorem (cf. \cite[I, Cor. 9.4]{FK}), the restriction map 
$H^1(X^s,\ZZ_\ell(1)) \to H^1(C_1^s,\ZZ_\ell(1))$ is injective.
The restriction map is isomorphic to the map $T_\ell \uPic^0_{X/k,\red} \to T_\ell\uPic^0_{C_1/k}$
of the Tate modules. 
Hence, the kernel of  
$\uPic^0_{X/k,\red} \to \uPic^0_{C_1/k}$ is finite. 
This finishes the proof.

%------------------------------------------------------
\end{document}